\documentclass[twocolumn]{autart}    %

\usepackage{graphicx}          %

\usepackage{amsmath}
\usepackage{amsfonts}       %
\usepackage{amssymb}
\usepackage{color}
\usepackage{enumerate}
\usepackage{booktabs}
\usepackage{makecell}
\usepackage{tikz}

\newcommand{\abs}[1]{\left\vert #1 \right\vert}
\newcommand{\ra}{\rightarrow}
\newcommand{\Real}{\mathbb{R}}

\newcommand{\BlackBox}{\rule{1.5ex}{1.5ex}}  %
\newenvironment{proof}{\par\noindent{\bf Proof\ }}{\hfill\BlackBox\\[2mm]}

\newif\ifarXiversion
\arXiversiontrue

\begin{document}

\begin{frontmatter}

\title{A Concise Lyapunov Analysis of Nesterov's Accelerated Gradient Method\thanksref{footnoteinfo}} %

\author{Jun Liu}\ead{j.liu@uwaterloo.ca}

\address{Department of Applied Mathematics\\
       University of Waterloo\\
       Waterloo, Ontario N2L 3G1, Canada\\}  %

\begin{keyword}                           %
Optimization algorithms; Nesterov's accelerated gradient method; Lyapunov analysis%
\end{keyword}                             %

\begin{abstract}                          %
Convergence analysis of Nesterov's accelerated gradient method has attracted significant attention over the past decades. While extensive work has explored its theoretical properties and elucidated the intuition behind its acceleration, a simple and direct proof of its convergence rates is still lacking. We provide a concise Lyapunov analysis of the convergence rates of Nesterov's accelerated gradient method for both general convex and strongly convex functions.
\end{abstract}

\end{frontmatter}

\section{Introduction}

The resurgence of machine learning over the past decades has renewed interest in first-order optimization methods with provable fast convergence rates \cite{sutskever2013importance,polyak1964some,nesterov1983method}. Among them, Nesterov’s accelerated gradient method \cite{nesterov1983method,nesterov2013introductory} has gained significant attention due to its provable acceleration on general convex functions beyond quadratics. A special focus has been on using dynamical system tools \cite{su2016differential,polyak2017lyapunov,dobson2024connections,wilson2021lyapunov} and control-theoretical methods \cite{lessard2016analysis,padmanabhan2024analysis} for the analysis and design of such algorithms.

In the standard textbook \cite{nesterov2013introductory} by Nesterov, the convergence analysis of accelerated gradient methods is conducted using a technique known as estimating sequences. These are essentially auxiliary comparison functions used to prove the convergence rates of optimization algorithms. As pointed out in \cite{wilson2021lyapunov}, estimating sequences are usually constructed inductively and can be difficult to understand and apply. This motivated the Lyapunov analysis in \cite{wilson2021lyapunov}, which aims to unify the analysis of a broad class of accelerated algorithms. Despite this comprehensive work, to the best knowledge of the author, a simple and direct Lyapunov analysis of the original scheme of Nesterov’s accelerated gradient method is still lacking.

In this work, we provide a streamlined and concise Lyapunov analysis of Nesterov’s accelerated gradient method for both general convex and strongly convex functions. We believe this analysis offers a clear and unified perspective on its convergence behavior, is more accessible, and can hopefully provide insights for analyzing Nesterov’s accelerated gradient method in more general settings, such as proving  acceleration in stochastic optimization beyond quadratics \cite{assran2020convergence}.

The use of Lyapunov functions (sometimes called potential functions) for the analysis of optimization algorithms has been popularized over the years by various authors. In addition to \cite{wilson2021lyapunov} mentioned above, readers are referred to \cite[Section~5]{bansal2019potential}, \cite[Section~4]{d2021acceleration}, \cite[Section 4]{chambolle2016introduction}, and references therein. Readers are also referred to dissipative theory, which is closely related to Lyapunov analysis, in the context of Nesterov’s accelerated method \cite{hu2017dissipativity}. We will add several remarks throughout the paper to point readers to closely related work.

\section{Preliminaries}

We recall some basic definitions and results for unconstrained smooth optimization, which can be found in \cite[Chapter 2]{nesterov2013introductory}. We use \( \abs{x} \) to denote the Euclidean norm of $x\in\Real^n$ and \( x \cdot y\) to denote the inner product of $x,y\in\Real^n$. The gradient of a differentiable function $f:\,\Real^n\ra\Real$ is denoted by $\nabla f$. 

\begin{defn}[$L$-smoothness]\label{def:l-smooth}\em
    A continuously differentiable function $f:\,\Real^n\ra\Real$ is said to be \textit{$L$-smooth}, if its gradient is Lipschitz continuous with Lipschitz constant $L>0$, i.e., 
    $$
    \abs{\nabla f(x)- \nabla f(y)} \le L\abs{x-y},
    $$
    for all $x,y\in\Real^n$.
\end{defn}

\begin{defn}[Convexity]\label{def:convexity}\em
    A differentiable function $f:\,\Real^n\ra\Real$ is said to be \textit{convex} on $\Real^n$, if it satisfies
    \begin{equation}
        \label{eq:convexity}
        f(y) \ge f(x) + \nabla f(x) \cdot (y-x),
    \end{equation}
    for all $x,y\in\Real^n$.
\end{defn}

\begin{defn}[Strong convexity]\label{def:strong-convexity}\em
    A differentiable function $f:\,\Real^n\ra\Real$ is said to be \textit{$\mu$-strongly convex} on $\Real^n$ for some $\mu> 0$, if it satisfies
    \begin{equation}
        \label{eq:strong_convexity}
        f(y) \ge f(x) + \nabla f(x) \cdot (y-x) + \frac{\mu}{2}\abs{y-x}^2,
    \end{equation}
    for all $x,y\in\Real^n$. 
\end{defn}

The following well-known inequality for $L$-smooth functions (also known as the descent lemma) will be used later. 

\begin{lem}\cite[Lemma 1.2.3]{nesterov2013introductory}\label{lem:l-smooth}
    If $f:\,\Real^n\ra\Real$ is $L$-smooth, then 
    $$
    f(y) \le f(x) + (y-x)\cdot \nabla f(x) + \frac{L}{2}\abs{x-y}^2
    $$
    for all $x,y\in\Real^n$.
\end{lem}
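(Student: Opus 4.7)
The plan is to use the fundamental theorem of calculus along the line segment joining $x$ and $y$, together with the Lipschitz bound on $\nabla f$ supplied by Definition~\ref{def:l-smooth}. Since $f$ is continuously differentiable, I can write
$$
f(y) - f(x) = \int_0^1 \nabla f(x + t(y-x)) \cdot (y - x)\, dt,
$$
and then subtract off the linear part $\nabla f(x) \cdot (y - x)$ by pulling it under the integral.

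After that subtraction, the residual becomes
$$
f(y) - f(x) - \nabla f(x)\cdot(y-x) = \int_0^1 \bigl[\nabla f(x + t(y-x)) - \nabla f(x)\bigr] \cdot (y - x)\, dt.
$$
I would then bound the integrand by Cauchy--Schwarz, giving a factor $\abs{\nabla f(x + t(y-x)) - \nabla f(x)} \cdot \abs{y-x}$, and apply $L$-smoothness (Definition~\ref{def:l-smooth}) to the first factor. This produces an upper bound of $L t \abs{y-x}^2$ on the integrand.

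The final step is the trivial integration $\int_0^1 t\, dt = 1/2$, which yields the desired constant $L/2$. There is no real obstacle here: the argument is a direct calculus estimate, and every inequality used (Cauchy--Schwarz and the Lipschitz bound) is sharp up to a constant, so no additional cleverness is needed. The only subtlety worth noting is the use of the absolute-value bar notation $\abs{\cdot}$ for the Euclidean norm, consistent with the paper's convention.
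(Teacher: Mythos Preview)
Your argument is correct and is exactly the standard proof of the descent lemma. The paper itself does not give a proof but simply cites \cite[Lemma~1.2.3]{nesterov2013introductory}, where the same integral-representation argument you describe is used.
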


An immediate consequence of Lemma \ref{lem:l-smooth} is that, with a gradient descent step 
$
y = x - \alpha \nabla f(x)   
$ 
on an $L$-smooth function $f$, we have
\begin{align}
f(y) & \le f(x) + (-\alpha + \frac{L\alpha^2}{2}) \abs{\nabla f(x)}^2, \label{eq:gd-step}
\end{align}
which, in particular, leads to
\begin{align}
f(y) & \le f(x) - \frac{1}{2L}\abs{\nabla f(x)}^2,    \label{eq:gd-decrease}
\end{align}
when $\alpha=\frac{1}{L}$.

\section{General convex functions}

Consider Nesterov's accelerated gradient (NAG) method: 
\begin{equation}
    \label{eq:nesterov}
    \begin{aligned}
    x_{k+1} & = y_k - \alpha_k \nabla f(y_k),\\
    y_k & = x_k + \beta_k (x_k - x_{k-1}),\quad k\ge 0,
    \end{aligned}
\end{equation}
where $x_0=x_{-1}$ are given as initial conditions and $\alpha_k$ and $\beta_k$ are parameters to be chosen. 

\begin{thm}\label{thm:weakly_convex}
Let $f:\Real^n\ra\Real$ be $L$-smooth and convex. Let $x^*$ be a minimum of $f$ and $f^*=f(x^*)$. Then the NAG algorithm (\ref{eq:nesterov}) with $\alpha_k=\alpha \in (0, \frac{1}{L}]$ and $\beta_k=\frac{k-1}{k+r-1}$, $r\ge 3$, leads to    
\begin{equation}
    \label{eq:bound_weakly_convex}
    f(x_k) - f^* \le \frac{(r-1)^2\abs{x_0-x^*}^2}{2\alpha(k+r-2)^2},\quad k\ge 0.
\end{equation}
\end{thm}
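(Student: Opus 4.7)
The plan is to construct a Lyapunov function of the form
\begin{equation*}
\Phi_k = A_k (f(x_k) - f^*) + \tfrac{1}{2\alpha}\abs{u_k - x^*}^2
\end{equation*}
with $A_k \sim k^2$, non-increasing along the iterates, so that the claimed rate falls out by telescoping. The auxiliary sequence $u_k$ is chosen so that the momentum point $y_k$ becomes a convex combination of $x_k$ and $u_k$. Writing $\beta_k = (t_{k-1}-1)/t_k$ with $t_k := (k+r-1)/(r-1)$, this forces the choice
\begin{equation*}
u_k := t_{k-1}\, x_k - (t_{k-1}-1)\, x_{k-1},
\end{equation*}
for which $u_0 = x_0$, $y_k = (1 - 1/t_k)\,x_k + (1/t_k)\,u_k$, and $u_k - x^* = t_k(y_k - x^*) - (t_k - 1)(x_k - x^*)$.

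The one-step decrease follows from three ingredients. A direct algebraic manipulation using $x_{k+1} = y_k - \alpha\nabla f(y_k)$ yields the clean update $u_{k+1} - u_k = -\alpha t_k \nabla f(y_k)$. Applying convexity at $y_k$ against both $x^*$ and $x_k$ and substituting the identity for $u_k - x^*$ above yields
\begin{equation*}
\nabla f(y_k)\cdot(u_k - x^*) \ge t_k(f(y_k) - f^*) - (t_k - 1)(f(x_k) - f^*).
\end{equation*}
The descent inequality \eqref{eq:gd-step} with $\alpha \le 1/L$ delivers $f(y_k) - f(x_{k+1}) \ge \tfrac{\alpha}{2}\abs{\nabla f(y_k)}^2$. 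Expanding $\abs{u_{k+1} - x^*}^2 - \abs{u_k - x^*}^2$ via the clean update and substituting the last two bounds, the $\abs{\nabla f(y_k)}^2$ terms cancel exactly and produce
\begin{equation*}
t_k^2(f(x_{k+1}) - f^*) + \tfrac{1}{2\alpha}\abs{u_{k+1} - x^*}^2 \le t_k(t_k - 1)(f(x_k) - f^*) + \tfrac{1}{2\alpha}\abs{u_k - x^*}^2.
\end{equation*}
Taking $A_k = t_{k-1}^2$ and verifying the elementary inequality $t_k(t_k - 1) \le t_{k-1}^2$, equivalent to $(r-3)k + (r-2)^2 \ge 0$ and manifest for $r \ge 3$, then gives $\Phi_{k+1} \le \Phi_k$ for every $k \ge 1$.

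The main obstacle I anticipate is initialization: $\Phi_0$ contains the extra term $\tfrac{(r-2)^2}{(r-1)^2}(f(x_0) - f^*)$, so telescoping naively from $k = 0$ would inflate the constant. To sidestep this, I would exploit that $x_{-1} = x_0$ forces $y_0 = x_0$, making the very first update $x_1 = x_0 - \alpha\nabla f(x_0)$ an ordinary gradient descent step; for this step, \eqref{eq:gd-step} combined with the convexity bound $\nabla f(x_0)\cdot(x_0 - x^*) \ge f(x_0) - f^*$ yields $\Phi_1 \le \tfrac{1}{2\alpha}\abs{x_0 - x^*}^2$ directly. Telescoping then gives the theorem's bound for all $k \ge 1$, and the $k = 0$ case follows from the descent lemma together with $(r-1)^2 \ge (r-2)^2$. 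The two genuinely creative moves in the plan are selecting the correct $u_k$ and noticing that the coefficients align so the gradient-squared terms cancel perfectly; everything else is bookkeeping.
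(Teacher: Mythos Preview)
Your proof is correct and follows essentially the same approach as the paper: your $u_k$ coincides with the paper's $p_k + x_k$ (with $t_{k-1} = a_k$), and your $\Phi_k$ is $\tfrac{1}{2\alpha} V_k$. The one difference is that the paper simply \emph{defines} $a_0 = 0$ (departing from the general formula $a_k = (k+r-2)/(r-1)$), which makes $V_0 = \abs{x_0 - x^*}^2$ directly and the inequality $a_1(a_1-1)=0\le a_0^2$ trivially true, thereby sidestepping the separate first-step analysis you identify as the main obstacle.
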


\begin{proof}
Define $a_0=0$ and $a_k=\frac{k+r-2}{r-1}$, $k=1,2,\ldots$. We can rewrite $\beta_k = \frac{\frac{k+r-2}{r-1}-1}{\frac{k+r-1}{r-1}} = \frac{a_k-1}{a_{k+1}}$, $k\ge 1$. Introduce 
\begin{equation}\label{eq:pk}
p_k: = (a_k-1)(x_k-x_{k-1}) \stackrel{\eqref{eq:nesterov}}{=} a_{k+1}(y_k-x_k),\quad k\ge 1.    
\end{equation}
Let $p_0=0$. Then equation (\ref{eq:pk}) also holds for $k=0$ by the initial condition $x_0=x_{-1}$ and $y_0=x_0$. By (\ref{eq:pk}) and (\ref{eq:nesterov}), it is straightforward to verify that
\begin{align*}
p_{k+1} + x_{k+1} 
& = p_k + x_k - a_{k+1}\alpha \nabla f(y_k),\quad k\ge 0.
\end{align*}
It follows that
\begin{align}
&\abs{ p_{k+1} + x_{k+1} - x^*}^2 \notag\\
&= \abs{ p_{k} + x_{k} - x^*}  - 2\alpha a_{k+1}( p_{k} + x_{k} - x^*)\cdot \nabla f(y_k)\notag\\
&\, + a_{k+1}^2\alpha^2 \abs{\nabla f(y_k)}^2\notag\\
& = \abs{ p_{k} + x_{k} - x^*} - 2\alpha  a_{k+1}(a_{k+1}-1)(y_k-x_k)\cdot \nabla f(y_k) \notag\\
&\, -  2\alpha  a_{k+1}(y_k - x^*)\cdot \nabla f(y_k) + a_{k+1}^2\alpha^2 \abs{\nabla f(y_k)}^2.\label{eq:est1}
\end{align}
By (\ref{eq:gd-step}), we have 
\begin{equation}
\label{eq:est_new}
f(x_{k+1}) \le f(y_k) + (-\alpha + \frac{L\alpha^2}{2})\abs{\nabla f(y_k)}^2. 
\end{equation}
By convexity of $f$ (Definition \ref{def:convexity}),  we have
\begin{equation}
\label{eq:est3}
(y_k-x_k)\cdot \nabla f(y_k) \ge f(y_k) - f(x_k). 
\end{equation}
and
\begin{equation}
\label{eq:est4}
(y_k - x^*)\cdot \nabla f(y_k) \ge f(y_k) - f^*.
\end{equation}
Substituting (\ref{eq:est3}) and (\ref{eq:est4}) into (\ref{eq:est1}) gives
\begin{align}
    &\abs{ p_{k+1} + x_{k+1} - x^*}^2 \notag\\
&\le \abs{ p_{k} + x_{k} - x^*}^2 - 2\alpha a_{k+1}(a_{k+1}-1) (f(y_k) - f(x_k))\notag\\
&\quad -  2\alpha a_{k+1} (f(y_k) - f^*)  + a_{k+1}^2\alpha^2 \abs{\nabla f(y_k)}^2\notag\\
&\le \abs{ p_{k} + x_{k} - x^*}^2 - 2\alpha a_{k+1}^2 (f(y_k) - f^*) \notag\\
&\quad + 2\alpha a_k^2 (f(x_k) - f^*)  + a_{k+1}^2\alpha^2 \abs{\nabla f(y_k)}^2\notag\\
&\le \abs{ p_{k} + x_{k} - x^*}^2 - 2\alpha a_{k+1}^2 (f(x_{k+1}) - f^*) \notag\\
&\quad + 2\alpha a_k^2 (f(x_k) - f^*) \notag\\
&\quad + a_{k+1}^2\alpha^2(- 1 + L\alpha)\abs{\nabla f(y_k)}^2 \notag  \\
&\le \abs{ p_{k} + x_{k} - x^*}^2 - 2\alpha a_{k+1}^2 (f(x_{k+1}) - f^*) \notag\\
&\quad + 2\alpha a_k^2 (f(x_k) - f^*),
\label{eq:est5}
\end{align}
where the first inequality follows from substituting (\ref{eq:est3}) and (\ref{eq:est4}) into (\ref{eq:est1}), the second inequality used $a_{k+1}(a_{k+1}-1)\le a_k^2$ for all $k\ge 0$ (which can be verified directly), the third inequality follows from (\ref{eq:est_new}), and the fourth inequality holds because $\alpha\le \frac{1}{L}$.

Inequality (\ref{eq:est5}), by rearrangement, shows that  
$$
V_{k} = \abs{ p_{k} + x_{k} - x^*}^2 + 2\alpha a_{k}^2 (f(x_{k}) - f^*)
$$
is a \textit{Lyapunov function} in the sense that it is nonincreasing with respect to $k$ along iterates of (\ref{eq:nesterov}). It follows that
\begin{align*}
2\alpha a_{k}^2 (f(x_{k}) - f^*)\le V_k \le V_0 = \abs{x_0-x^*}^2, \\
&
\end{align*}
which gives (\ref{eq:bound_weakly_convex}). 
\end{proof}

\begin{rem}
The proof of Theorem \ref{thm:weakly_convex} essentially follows the argument of the original proof in \cite{nesterov1983method}. We made the choice of step size more explicit, following \cite{su2016differential}. Additionally, we reformulated the more general step size choice, \(\beta_k\), considered in \cite{su2016differential}, to align with the format of the original proof in \cite{nesterov1983method}. We believe this streamlined proof will be of interest to readers.
\end{rem}

\begin{cor}
    The special case of Theorem \ref{thm:weakly_convex} with $\alpha=\frac{1}{L}$ and $\beta_k=\frac{k-1}{k+2}$ gives 
    \begin{equation}
    \label{eq:bound_weakly_convex_1}
    f(x_k) - f^* \le \frac{2L\abs{x_0-x^*}^2}{(k+1)^2},\quad k\ge 0.
\end{equation}
\end{cor}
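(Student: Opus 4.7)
The plan is to obtain the corollary as a direct specialization of Theorem \ref{thm:weakly_convex}, so no new analysis is required — only a verification that the parameter choices line up and an arithmetic substitution into the bound \eqref{eq:bound_weakly_convex}.

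First I would check the parameter matching. Setting $r=3$ in the formula $\beta_k=\frac{k-1}{k+r-1}$ from Theorem \ref{thm:weakly_convex} gives exactly $\beta_k=\frac{k-1}{k+2}$, which matches the corollary's momentum sequence. The step size choice $\alpha=\frac{1}{L}$ lies in the admissible range $(0,\frac{1}{L}]$ required by the theorem. Since $f$ is assumed to be $L$-smooth and convex in the corollary's statement (inherited from Theorem \ref{thm:weakly_convex}), all hypotheses of the theorem are satisfied.

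Next I would substitute $r=3$ and $\alpha=\frac{1}{L}$ directly into \eqref{eq:bound_weakly_convex}. The numerator becomes $(r-1)^2\abs{x_0-x^*}^2 = 4\abs{x_0-x^*}^2$, and the denominator becomes $2\cdot \frac{1}{L}\cdot (k+r-2)^2 = \frac{2}{L}(k+1)^2$. Dividing yields
\begin{equation*}
f(x_k) - f^* \le \frac{4\abs{x_0-x^*}^2}{(2/L)(k+1)^2} = \frac{2L\abs{x_0-x^*}^2}{(k+1)^2},
\end{equation*}
which is precisely \eqref{eq:bound_weakly_convex_1}.

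Since the corollary is a pure plug-in, I do not anticipate any obstacle; the only thing worth being careful about is the index bookkeeping between $r$, $k+r-1$, and $k+r-2$, which is easy to slip on but is unambiguously resolved by the computation above.
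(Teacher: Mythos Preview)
Your proposal is correct and matches the paper's approach exactly: the paper states this corollary without proof as an immediate specialization of Theorem~\ref{thm:weakly_convex}, and your parameter check ($r=3$, $\alpha=1/L$) together with the arithmetic substitution into \eqref{eq:bound_weakly_convex} is precisely the intended justification.
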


Here, acceleration refers to the \(1/k^2\) convergence rate, compared to the \(1/k\) convergence rate achieved by standard gradient descent \cite{nesterov2013introductory}.

\section{Strongly convex functions}

\begin{thm}
    \label{thm:strong_convex}
    Let $f:\Real^n\ra\Real$ be $L$-smooth and $\mu$-strongly convex. Let $x^*$ be a minimum of $f$ and $f^*=f(x^*)$. Then the NAG algorithm (\ref{eq:nesterov}) with $\alpha_k=\frac{1}{L}$ and $\beta_k=\frac{\sqrt{\kappa}-1}{\sqrt{\kappa}+1}$, where $\kappa=\frac{L}{\mu}$, leads to 
    \begin{equation*}
        f(x_{k})-f^* \le \left(1-\frac{1}{\sqrt{\kappa}}\right)^k \left(f(x_0)-f^* + \frac{\mu}{2}\abs{x_0-x^*}^2\right)
    \end{equation*}
    for all $k\ge 0$.
\end{thm}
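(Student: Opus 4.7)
The plan is to adapt the Lyapunov-function strategy of Theorem~\ref{thm:weakly_convex} to the strongly convex setting, where the Lyapunov function should now contract geometrically by the factor $1-q$ per step, with $q := 1/\sqrt{\kappa}$. Following the standard three-sequence representation of NAG, I would introduce an auxiliary sequence $z_k$ so that $y_k$ becomes the convex combination $y_k = (1-q)x_k + qz_k$. Combined with the momentum update in \eqref{eq:nesterov}, this forces $z_k = x_k + (\beta/q)(x_k - x_{k-1})$; in particular, the initial condition $x_{-1} = x_0$ gives $z_0 = x_0$. The natural candidate Lyapunov function is then
$$
V_k := f(x_k) - f^* + \frac{\mu}{2}\abs{z_k - x^*}^2,
$$
and the goal is to establish the one-step contraction $V_{k+1} \le (1-q)V_k$. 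Iterating this and using $V_0 = f(x_0) - f^* + \frac{\mu}{2}\abs{x_0 - x^*}^2$ together with $V_k \ge f(x_k) - f^*$ then delivers the stated bound.

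To establish the contraction, I would assemble three ingredients. First, the descent lemma with $\alpha = 1/L$ yields $f(x_{k+1}) \le f(y_k) - \frac{1}{2L}\abs{\nabla f(y_k)}^2$. Second, applying $\mu$-strong convexity at $y_k$ against $x_k$ and against $x^*$ produces two inequalities whose $(1-q),q$-weighted convex combination, via the identity $(1-q)(y_k - x_k) + q(y_k - x^*) = q(z_k - x^*)$ (an immediate consequence of $y_k = (1-q)x_k + qz_k$), collapses the gradient inner product into the single term $q\nabla f(y_k)\cdot(z_k - x^*)$ and yields a clean upper bound on $f(y_k)$ involving $(1-q)f(x_k) + qf^*$ and a negative quadratic remainder $-\frac{\mu}{2}[(1-q)\abs{y_k-x_k}^2 + q\abs{y_k - x^*}^2]$. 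Third, I would derive an explicit recursion for $z_{k+1}$ by substituting the momentum and gradient updates into its definition, obtaining $z_{k+1}$ as an affine function of $z_k$, $y_k$, and $\nabla f(y_k)$; expanding $\abs{z_{k+1} - x^*}^2$ then produces, among other things, a cross term matching and cancelling the $q\nabla f(y_k)\cdot(z_k-x^*)$ above when multiplied by $\mu/2$.

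The main obstacle will be the algebraic bookkeeping in the one-step contraction: after summing the function-value bound with $\frac{\mu}{2}(\abs{z_{k+1}-x^*}^2 - (1-q)\abs{z_k-x^*}^2)$, one must verify that the coefficients of $\abs{\nabla f(y_k)}^2$, of the cross term with $z_k - x^*$, and of the quadratic terms $\abs{y_k - x_k}^2$ and $\abs{y_k - x^*}^2$ all conspire to produce a non-positive expression. The specific parameter choices $\alpha = 1/L$ and $\beta = (\sqrt{\kappa}-1)/(\sqrt{\kappa}+1)$ are precisely tuned to make this cancellation happen (for instance, $\beta$ is chosen so that $z_{k+1}$ takes the form $\frac{1}{1+q}z_k + \frac{q}{1+q}y_k - \eta\nabla f(y_k)$ for an $\eta$ compatible with the descent-lemma gradient term), so the final task reduces to a careful but mechanical verification of the resulting inequalities.
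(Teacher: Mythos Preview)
Your high-level strategy is exactly the paper's: introduce an auxiliary ``estimate'' sequence, set $V_k = f(x_k)-f^* + \tfrac{\mu}{2}\abs{\,\cdot\, - x^*}^2$, and prove the one-step contraction $V_{k+1}\le(1-q)V_k$ with $q=1/\sqrt{\kappa}$. The gap is in the specific coupling weight you choose. You take $y_k = (1-q)x_k + qz_k$, whereas the paper takes
\[
v_k := (\sqrt{\kappa}+1)\,y_k - \sqrt{\kappa}\,x_k,\qquad\text{equivalently}\quad y_k = \tfrac{1}{1+q}\,x_k + \tfrac{q}{1+q}\,v_k,
\]
so the weight on the auxiliary point is $\tfrac{q}{1+q}=\tfrac{1}{\sqrt{\kappa}+1}$, not $q$. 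This is not cosmetic. With the paper's choice one checks directly from \eqref{eq:nesterov} that
\[
v_{k+1} \;=\; (1-q)\,v_k + q\,y_k - \tfrac{q}{\mu}\,\nabla f(y_k),
\]
so the target factor $1-q$ sits explicitly in the recursion. Expanding $\tfrac{\mu}{2}\abs{v_{k+1}-x^*}^2$, applying strong convexity at $x^*$ and (plain) convexity at $x_k$, and using the identity $y_k - v_k = \sqrt{\kappa}(x_k - y_k)$ makes every residual term cancel or come out nonpositive; no weighted combination of two strong-convexity inequalities is needed.

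With your $z_k$ the recursion you (correctly) obtain is $z_{k+1}=\tfrac{1}{1+q}z_k+\tfrac{q}{1+q}y_k-\eta\nabla f(y_k)$ with $\eta=\tfrac{1+q^2}{q(1+q)L}$. Two things then go wrong in the ``mechanical verification''. First, the Jensen-type bound on $\abs{z_{k+1}-x^*}^2$ produces $\tfrac{1}{1+q}\abs{z_k-x^*}^2$, and $\tfrac{1}{1+q}>1-q$, so this term alone exceeds what you need. Second, expanding directly, the cross term $\nabla f(y_k)\cdot(z_k-x^*)$ enters $\tfrac{\mu}{2}\abs{z_{k+1}-x^*}^2$ with coefficient $\tfrac{\mu\eta}{1+q}=\tfrac{q(1+q^2)}{(1+q)^2}\neq q$, so it does not cancel the $q\,\nabla f(y_k)\cdot(z_k-x^*)$ that your $(1-q,q)$-weighted convexity combination manufactures; the leftover pieces are sign-indefinite and do not combine into something nonpositive. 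In short, the coupling $\tau=q$ is precisely the wrong one here; replacing it by $\tau=\tfrac{q}{1+q}$ (equivalently, using the paper's $v_k$) is what makes the algebra close.
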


\begin{proof}
Introduce 
\begin{equation}
    \label{eq:vk}
v_k = (\sqrt{\kappa}+1)y_k - \sqrt{\kappa} x_k,\quad k\ge 0.
\end{equation}
By the initial condition $x_0=x_{-1}$ and (\ref{eq:nesterov}), we have $v_0=x_0=y_0$. 
By (\ref{eq:vk}) and (\ref{eq:nesterov}), it is also straightforward to verify that 
\begin{equation}
    \label{eq:vk+1}
v_{k+1} =v_k + \frac{1}{\sqrt{\kappa}}(y_k - v_k) - \frac{1}{\mu\sqrt{\kappa}} \nabla f(y_k).
\end{equation}
Define 
$$
V_k=f(x_{k})-f^* + \frac{\mu}{2}\abs{v_k-x^*}^2. 
$$ 
It follows that
\begin{align}
    V_{k+1} & = f(x_{k+1})-f^* + \frac{\mu}{2}\abs{v_{k+1}-x^*}^2 \notag\\
    &\stackrel{\eqref{eq:gd-decrease}}{\leq} f(y_k) - \frac{1}{2L}\abs{\nabla f(y_k)}^2 - f^* + \frac{\mu}{2}\abs{v_{k+1}-x^*}^2 \notag\\
    & \stackrel{\eqref{eq:vk+1}}{=} f(y_k) - \frac{1}{2L}\abs{\nabla f(y_k)}^2 - f^* \notag \\
    &\quad + \frac{\mu}{2}\abs{v_k-x^*}^2 + \frac{\mu}{2\kappa}\abs{y_k-v_k}^2 + \frac{1}{2\mu\kappa}\abs{\nabla f(y_k)}^2 \notag \\
    &\quad  - \frac{1}{\kappa}(y_k-v_k)\cdot \nabla f(y_k) + \frac{\mu}{\sqrt{\kappa}}(v_k-x^*)\cdot (y_k-v_k)\notag\\
    &\quad - \frac{1}{\sqrt{\kappa}}(v_k-x^*)\cdot \nabla f(y_k)
    \label{eq:v_est1}
\end{align}
Rewrite\footnote{This elementary equality  
\[
(a - b) \cdot (c - a) = \frac{1}{2} \left( \abs{c - b}^2 - \abs{c - a}^2 - \abs{a - b}^2 \right)
\]  
is sometimes called the three-point identity. To see an easy proof, write \( c - b = (a - b) + (c - a) \) and compute \( \abs{c - b}^2 \).}
\begin{align}
 & (v_k-x^*)\cdot (y_k-v_k) \notag\\
 & \quad = \frac12(\abs{y_k-x^*}^2 - \abs{v_k-x^*}^2  - \abs{y_k-v_k}^2),\label{eq:v_est2}
\end{align}
and
\begin{align}
 & -(v_k-x^*)\cdot \nabla f(y_k) \notag\\
 & \quad = (y_k - v_k)\cdot \nabla f(y_k)  + (x^*-y_k)\cdot \nabla f(y_k). \label{eq:v_est3}
\end{align}
By $\mu$-strong convexity of $f$ (Definition \ref{def:strong-convexity}), we have 
\begin{align}
 &  (x^* - y_k)\cdot \nabla f(y_k) \le f^* - f(y_k) - \frac{\mu}{2}\abs{y_k-x^*}^2.  \label{eq:v_est4}
\end{align}
Substituting (\ref{eq:v_est4}) into (\ref{eq:v_est3}), then substituting (\ref{eq:v_est3}) and (\ref{eq:v_est2}) into (\ref{eq:v_est1}), and canceling out terms, we obtain
\begin{align}
    V_{k+1} & \le (1-\frac{1}{\sqrt{\kappa}}) (f(y_k) - f^*) + (1-\frac{1}{\sqrt{\kappa}})  \frac{\mu}{2}\abs{v_k-x^*}^2 \notag\\
    &\quad + (\frac{\mu}{2\kappa} - \frac{\mu}{2\sqrt{\kappa}} ) \abs{y_k-v_k}^2 \notag\\
    &\quad - (\frac{1}{\kappa}-\frac{1}{\sqrt{\kappa}})(y_k - v_k)\cdot \nabla f(y_k).
    \label{eq:v_est5}
\end{align}
Applying convexity of $f$ (Definition \ref{def:convexity}), we have
\begin{align}
f(y_k) \le f(x_k) - (x_k-y_k)\cdot \nabla f(y_k).
\label{eq:v_est6}
\end{align}
Putting (\ref{eq:v_est6}) into (\ref{eq:v_est5}) and noticing that $\frac{\mu}{2\kappa} - \frac{\mu}{2\sqrt{\kappa}}\le 0$ because $\kappa\ge 1$, we have
\begin{align}
    V_{k+1} & \le (1-\frac{1}{\sqrt{\kappa}}) (f(x_k) - f^*) + (1-\frac{1}{\sqrt{\kappa}})  \frac{\mu}{2}\abs{v_k-x^*}^2 \notag\\
    - \bigg[(\frac{1}{\kappa}  & -\frac{1}{\sqrt{\kappa}})(y_k - v_k)+(1-\frac{1}{\sqrt{\kappa}})(x_k-y_k)\bigg]\cdot \nabla f(y_k),\notag
\end{align}
where the last term vanishes because by (\ref{eq:vk}) we have $y_k - v_k=\sqrt{\kappa}(x_k-y_k)$. We have proved that $V_{k+1}\le (1-\frac{1}{\sqrt{\kappa}}) V_k$ for $k\ge 0$. The conclusion follows immediately. 
\end{proof}

For strongly convex functions, acceleration of convergence by Nesterov's method (\ref{eq:nesterov}) refers to improving the convergence rates of the standard gradient descent method, \((1 - 1/\kappa)^k\) (with step size \(\alpha = 1/L\)) or \(((\kappa - 1)/(\kappa + 1))^{2k} = (1 - 2/(\kappa + 1))^{2k}\) (with step size \(\alpha = 2/(L+\mu)\)), to \((1 - 1/\sqrt{\kappa})^k\), as shown in Theorem \ref{thm:strong_convex} \cite{nesterov2013introductory}, which is a significant improvement for large \(\kappa\).

\begin{rem}
The proof is inspired by the estimating sequence argument in \cite{nesterov2013introductory}, which treats both general (weakly) and strongly convex functions in a unified fashion. While estimating sequences are powerful, their construction is inductive, and the analysis of the algorithm becomes entangled with its design. Lyapunov analysis is well known for effectively analyzing the stability of dynamical systems \cite{khalil2002nonlinear}. Since optimization algorithms can be viewed as discrete-time dynamical systems, it is not surprising that Lyapunov functions can be used to analyze their convergence \cite{polyak2017lyapunov,wilson2021lyapunov}. In particular, \cite{wilson2021lyapunov} provides a comprehensive study of Lyapunov analysis for accelerated methods in optimization as discretizations of continuous-time dynamical systems. Despite this comprehensive work, we believe a concise and direct Lyapunov analysis can be beneficial in revealing the convergence behavior of algorithm (\ref{eq:nesterov}). We encourage interested readers to contrast the analysis in \cite{wilson2021lyapunov}, particularly Proposition 13, as well as the proof using estimating sequences in \cite[Section 2.2]{nesterov2013introductory} with the proof of Theorem \ref{thm:strong_convex} presented here.
\end{rem}

\begin{rem}
Dissipativity theory for Nesterov's accelerated methods has been investigated in \cite{hu2017dissipativity}. A key benefit of this framework is that it enables the construction of Lyapunov functions, either numerically or analytically, by solving small semidefinite programs. In \cite{hu2017dissipativity}, the authors provided analytical solutions to LMIs for both the general and strongly convex cases of Nesterov’s method; see \cite[Theorem~9]{hu2017dissipativity} and \cite[Theorem~4]{hu2017dissipativity}, respectively. While their results are more general and parametrized by LMI solutions, Theorems~\ref{thm:weakly_convex} and~\ref{thm:strong_convex} in this paper provide concise and self-contained proofs for the basic forms of Nesterov’s scheme. These may be of interest to readers seeking simplified derivations or alternative insights.
\end{rem}

\section{Conclusions}

We have presented a concise Lyapunov analysis of the celebrated Nesterov's accelerated gradient method. We hope this simplified analysis is not only of tutorial value but also provides insights into the analysis of Nesterov's accelerated gradient method in other settings, such as proving their acceleration in the stochastic setting beyond quadratic functions \cite{assran2020convergence}, both in expectation \cite{bottou2018optimization} and in the almost sure sense \cite{liu2024almost}.

\begin{ack}                               %
This work was partially supported by the NSERC of Canada and the CRC program. The author is grateful to Nicolas Boumal for identifying a typo in an earlier version that caused an issue in the proof of Theorem \ref{thm:weakly_convex}. The author is also grateful to reviewers of an earlier version of this paper, who provided additional references. 
\end{ack}

\bibliographystyle{plain}        %
\bibliography{refs} 

\end{document}